\newcommand{\red}[1]{#1}
\newcommand{\eqnsection}{\renewcommand{\theequation}{\thesection.\arabic{equation}}
      \makeatletter \csname @addtoreset\endcsname{equation}{section}\makeatother}
\newtheorem{theorem}{Theorem}[section]
\newtheorem{lemma}[theorem]{Lemma}
\newtheorem{proposition}[theorem]{Proposition}
\newtheorem{remark}[theorem]{Remark}
\newcommand{\be}{\begin{equation}}
\newcommand{\ee}{\end{equation}}
\newcommand{\beq}{\begin{equation*}}
\newcommand{\eeq}{\end{equation*}}
\newcommand{\bq}{\begin{eqnarray}}
\newcommand{\eq}{\end{eqnarray}}
\def\P{{\mathbb{P}}}
\newcommand{\eps}{\varepsilon}
\newcommand{\EE}{\mathbb E}
\newcommand{\RR}{\mathbb R}
\def\reals{\RR}
\newcommand{\PP}{\mathbb P}
\newcommand{\cK}{{\mathcal K}}
\newcommand{\cN}{{\mathcal N}}
\newcommand{\abbr}[1]{{\sc\lowercase{#1}}}
\begin{document}

\title{Persistence of iterated partial sums}
\author{{\sc Amir Dembo}\thanks{The research was supported in part by NSF grants DMS-0806211 \red{and DMS-1106627}.}
\and \red{{\sc Jian Ding}\thanks{This research partially done when Jian Ding was at MSRI program in 2012.}}
\and {\sc Fuchang Gao}\thanks{This research partially done when Fuchang Gao visited Stanford university in March 2010.
The authors also thank NSF grant DMS-0806211 for the support.}}
\date{\today}
\maketitle
\begin{abstract}
Let $S_n^{(2)}$ denote the iterated partial sums. That is,
$S_n^{(2)}=S_1+S_2+\cdots +S_n$, where $S_i=X_1+X_2+\cdots+X_i$.
Assuming $X_1, X_2,\ldots,X_n$ are integrable, zero-mean,
i.i.d. random variables, we show that the persistence probabilities
$$p_n^{(2)}:=\PP\left(\max_{1\le i \le n}S_i^{(2)}<  0\right) \le c\sqrt{\frac{\EE|S_{n+1}|}{(n+1)\EE|X_1|}},$$
with $c \le 6 \sqrt{30}$ (and $c=2$ whenever $X_1$ is symmetric).
\red{The converse inequality holds whenever the non-zero
$\min(-X_1,0)$ is bounded or when it has only finite third
moment and in addition $X_1$ is squared integrable. Furthermore,
$p_n^{(2)}\asymp n^{-1/4}$ for any non-degenerate squared integrable,
i.i.d., zero-mean $X_i$.}
In contrast, we show that for any $0 < \gamma < 1/4$ there exist
integrable, zero-mean random variables for which the rate of
decay of $p_n^{(2)}$ is $n^{-\gamma}$.
\end{abstract}

\section{Introduction}
The estimation of probabilities of rare events is one of the
central themes of research in the theory of probability. Of
particular note are \emph{persistence} probabilities, formulated as
\bq
q_n = \PP\Big(\max_{1\le k\le n} Y_k < y \Big),\label{max}
\eq
where $\{Y_k\}_{k=1}^n$ is a sequence of zero-mean random variables.
For independent $Y_i$ the persistence probability is easily
determined to be the product of $\PP(Y_k < y)$ and to a large
extent this extends to the case of sufficiently weakly dependent
and similarly distributed $Y_i$, where typically $q_n$ decays
exponentially in $n$. In contrast, in the classical case of partial sums,
namely $Y_k=S_k=\sum_{i=1}^k X_i$ with $\{X_j\}$ i.i.d.
zero-mean random variables, it is well known that $q_n = O(n^{-1/2})$
decays as a power law. This seems to be one of the very few cases
in which a power law decay for $q_n$ can be proved and its exponent
is explicitly known. Indeed, within the large class of similar
problems where dependence between $Y_i$ is strong enough to rule out
exponential decay, the behavior of $q_n$ is very sensitive to
the precise structure of dependence between the variables
$Y_i$ and even merely determining its asymptotic rate can be very
challenging (for example, see \cite{DPSZ} for recent results in case
$Y_k = \sum_{i=1}^n X_i (1-c_{k,n})^i$ are the values of a random
Kac polynomials evaluated at certain non-random $\{c_{k,n}\}$).

We focus here on iterated sums of i.i.d. zero-mean,
random variables $\{X_i\}$. That is, with $S_n=\sum_{k=1}^n X_k$ and
\bq
S^{(2)}_n=\sum_{k=1}^n S_k=\sum_{i=1}^n(n-i+1)X_i\,,
\label{S2}
\eq
we are interested in the asymptotics as $n \to \infty$
of the persistence probabilities
\bq
p_n^{(2)}(y):=\P\left(\max_{1\le k\le n}S_k^{(2)}<
y\right),\  \overline{p}_n^{(2)}(y):=\P\left(\max_{1\le k\le n}S_k^{(2)}\le
y\right)\,, \label{prob}
\eq
where $y \ge 0$ is independent of $n$.
With $y \ll n$ it immediately
follows from Lindeberg's \abbr{clt}
(when $X_i$ are square integrable),
that $p_n^{(2)}(y)\to 0$
as $n\to\infty$ and our goal is thus to find a sharp rate for
this decay to zero.

Note that for any fixed $y>0$ we have that
$\overline{p}_n^{(2)} (y) \asymp p_n^{(2)}(y) \asymp p_n^{(2)}(0)$
up to a constant depending only on $y$, here and throughout the paper, $A\asymp B$ means that there exists two positive constants $C_1$ and $C_2$, such that $C_1A\le B\le C_2A$.  Indeed, because
$\EE X_1^->0$, clearly $\PP(X_1<-\eps)>0$ for $\eps=y/k$
and some integer $k \ge 1$. Now, for any $n\ge 1$ and $z \ge 0$,
$$
\overline{p}^{(2)}_n(z) \ge
p_n^{(2)}(z) \ge \PP(X_1<-\eps)\overline{p}_{n-1}^{(2)}(z+\eps)\ge
\PP(X_1<-\eps) \overline{p}_{n}^{(2)}(z+\eps)
$$
and applying this inequality for $z=i \eps$, $i=0,1,\ldots,k-1$ we
conclude that
\bq
p_n^{(2)}(0)\ge [\PP(X_1<-\eps)]^k \overline{p}_n^{(2)}(y).\label{p0-py}
\eq
Of course, we also have the complementary trivial relations
$p_n^{(2)}(0)\le \overline{p}_n^{(2)} (0) \le p_n^{(2)}(y) \le
\overline{p}_n^{(2)}(y)$, so it suffices to consider
only $p_n^{(2)}(0)$ and $\overline{p}_n^{(2)}(0)$ which we
denote hereafter by $p_n^{(2)}$ and $\overline{p}_n^{(2)}$,
respectively. Obviously, $p_n^{(2)}$ and $\overline{p}_n^{(2)}$
have the same order (with $p_n^{(2)} = \overline{p}_n^{(2)}$
whenever $X_1$ has a density), and we consider both only
in order to draw the reader's attention to potential identities
connecting the two sequences $\{p_n^{(2)}\}$ and $\{\overline{p}_n^{(2)}\}$.

Persistence probabilities such as $p_n^{(2)}$ appear in many applications.
For example, the precise problem we consider here arises in the study
of the so-called sticky particle systems (c.f. \cite{Vysotsky2} and
the references therein). In case of standard normal $X_i$ it is also
related to entropic repulsion for $\nabla^2$-Gaussian fields (c.f.
\cite{grad2} and the references therein), though
here we consider the easiest version, namely a one dimensional
$\nabla^2$-Gaussian field. In his 1992 seminal paper, Sinai \cite{Sinai}
proved that if $\PP(X_1=1)=\PP(X_1=-1)=1/2$, then
$p_n^{(2)}\asymp n^{-1/4}$. However, his method relies on the fact
that for Bernoulli $\{X_k\}$ all local minima of $k \mapsto S_k^{(2)}$
correspond to values of $k$ where $S_k=0$, and as such form a
sequence of regeneration times. For this reason, Sinai's method
can not be readily extended to most other distributions. Using a
different approach, more recently Vysotsky \cite{Vysotsky} managed to
extend Sinai's result that $p_n^{(2)}\asymp n^{-1/4}$
to $X_i$ which are double-sided exponential, and a few other
special types of random walks. At about the same time,
Aurzada and Dereich \cite{AD} used strong approximation techniques
to prove the bounds $n^{-1/4}(\log n)^{-4}\lesssim p_n^{(2)}\lesssim
n^{-1/4}(\log n)^4$ for zero-mean random variables $\{X_i\}$
such that $\EE [e^{\beta |X_1|}] <\infty$ for some $\beta>0$.
However, even for $X_i$ which are standard normal variables it was not
known before the present results whether these logarithmic terms are needed, and if not,
how to get rid of them. Our main result, stated below, \red{fully
resolves this question, requiring only that $\EE X_1^2$ is finite and positive.}

\begin{theorem}\label{thm-main}
For i.i.d. $\{X_k\}$ of zero mean and $\red{0<} \EE|X_1|<\infty$,
let $S_n^{(2)}=S_1+S_2+\cdots +S_n$, where $S_i=X_1+X_2+\cdots+
X_i$. Then,
\bq
\sum_{k=0}^np_{k}^{(2)}\overline{p}_{n-k}^{(2)}\le c_1^2\frac{\EE|S_{n+1}|}{\EE|X_1|},\label{upper-bound}
\eq
where $c_1\le 6 \sqrt{30}$, and $c_1=2$ if $X_1$ is symmetric. \red{The converse inequality}
\bq
\sum_{k=0}^{n} p_{k}^{(2)}p_{n-k}^{(2)} \ge \frac{1}{c_2}
\frac{\EE|S_{n+1}|}{\EE|X_1|}
\label{lower-bound}
\eq
holds for some finite $c_2$ \red{whenever $X_1^-$ is bounded, or
with $X_1^-$ only having finite third moment and $X_1$ squared integrable}.
Taken together, these bounds imply that
\bq
\frac{1}{4c_1c_2}\sqrt{\frac{\EE|S_{n+1}|}{(n+1)\EE|X_1|}}\le p_n^{(2)}\le c_1\sqrt{\frac{\EE|S_{n+1}|}{(n+1)\EE|X_1|}}.
\label{two-side}
\eq
\red{Furthermore, assuming \emph{only} that $\EE X_1=0$ and $0< \EE (X_1^2) < \infty$,
we have that
\bq\label{quarter-exp}
p_n^{(2)}\asymp n^{-1/4} \,.
\eq
}
\end{theorem}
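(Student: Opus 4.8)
I would first dispose of the upper bound in \eqref{quarter-exp}. Since $k\mapsto p_k^{(2)}$ and $k\mapsto\overline p_k^{(2)}$ are non-increasing, each of the $n+1$ summands in \eqref{upper-bound} is at least $(p_n^{(2)})^2$, so \eqref{upper-bound} gives $(n+1)(p_n^{(2)})^2\le c_1^2\,\EE|S_{n+1}|/\EE|X_1|$; this is the right-hand inequality of \eqref{two-side}, and it needs no hypothesis beyond integrability. When $0<\EE X_1^2<\infty$ one has $\EE|X_1|>0$ and, by Jensen, $\EE|S_{n+1}|\le\sqrt{(n+1)\EE X_1^2}$, whence $p_n^{(2)}\le c_1\bigl(\EE X_1^2/(\EE|X_1|)^2\bigr)^{1/4}(n+1)^{-1/4}$.

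For the matching lower bound the plan is to reduce, by a truncation, to the regime covered by \eqref{lower-bound}. If $X_1$ is bounded below there is nothing to prove, so assume $\PP(X_1<-M)>0$ for every $M>0$. Set $Z_i^{(M)}:=X_i\vee(-M)$ and $\mu_M:=\EE Z_1^{(M)}$; by monotone convergence $\mu_M\downarrow 0$ as $M\uparrow\infty$ while $\mu_M>0$, and $Y_i^{(M)}:=Z_i^{(M)}-\mu_M$ defines an i.i.d., zero-mean sequence which is square integrable with $\mathrm{Var}(Y_1^{(M)})\to\EE X_1^2>0$ and whose negative part is bounded by $M+\mu_M$. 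Hence \eqref{two-side} (via \eqref{lower-bound}) applies to $\{Y_i^{(M)}\}$ and, using $\EE|\widetilde S_{m+1}|\asymp\sqrt{m+1}$ for square-integrable increments (upper bound by Jensen, lower bound by the \abbr{clt} and uniform integrability), yields $\widetilde p_m^{(2)}\ge c(M)\,m^{-1/4}$, where tildes refer throughout to the walk with increments $Y_i^{(M)}$. Because $X_i\le Z_i^{(M)}=Y_i^{(M)}+\mu_M$ and the weights $k-i+1$ in $S_k^{(2)}$ are non-negative, under the obvious coupling $S_k^{(2)}\le\widetilde S_k^{(2)}+\tfrac12\mu_M k(k+1)$ for all $k$, so that
\be
p_n^{(2)}\ \ge\ \PP\Bigl(\max_{1\le k\le n}\bigl[\widetilde S_k^{(2)}+\tfrac12\mu_M\,k(k+1)\bigr]<0\Bigr)\ \ge\ \PP\Bigl(\max_{1\le k\le n}\widetilde S_k^{(2)}<-\tfrac12\mu_M\,n(n+1)\Bigr). \label{pp-trunc}
\ee

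The main obstacle is that \eqref{pp-trunc} is vacuous for any fixed $M$: once $k\gtrsim\mathrm{Var}(Y_1^{(M)})/\mu_M^{2}$ the drift $\tfrac12\mu_M k^2$ dominates the $k^{3/2}$-scale fluctuations of $\widetilde S_k^{(2)}$, forcing the probabilities in \eqref{pp-trunc} to decay exponentially in $n$. One is therefore forced to let $M=M_n\to\infty$, but slowly enough that $\mu_{M_n}n^2=o(n^{3/2})$; this is possible precisely because $\mu_M\downarrow 0$ (choose $M(j)\uparrow\infty$ with $\mu_{M(j)}\le 1/j$ and put $M_n:=M(\lceil\sqrt n\,\log n\rceil)$, so that $\mu_{M_n}\le n^{-1/2}/\log n$). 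With such a choice the barrier $\tfrac12\mu_{M_n}n(n+1)$ in \eqref{pp-trunc} is, at every scale $k\le n$, a uniformly vanishing ($\le 1/\log n$) fraction of the natural fluctuation scale $k^{3/2}$ — as one sees by Brownian rescaling, $\tfrac12\mu_{M_n}k^2/k^{3/2}=\tfrac12\mu_{M_n}\sqrt k\le 1/(2\log n)$ — which makes it plausible that the right-hand side of \eqref{pp-trunc} is still $\asymp n^{-1/4}$. Turning this into a proof requires two robustness statements about the machinery behind \eqref{two-side}: \textbf{(i)} that the constant $c_2$ in \eqref{lower-bound}, for summands with bounded negative part, can be taken to depend on the law only through scale-invariant quantities (such as $\EE X_1^2/(\EE|X_1|)^2$ and $\PP(X_1<0)$) that stay bounded along the truncation $M_n\to\infty$, so that $c(M_n)$ in the paragraph above does not deteriorate; and \textbf{(ii)} a perturbation estimate to the effect that adding to a mean-zero, square-integrable, bounded-below walk a deterministic drift that is a uniformly $o(1)$ fraction of the local fluctuation scale changes the iterated-sum persistence probability by at most a constant factor, so that the first probability in \eqref{pp-trunc} is comparable to $\widetilde p_n^{(2)}$. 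Granting (i) and (ii), \eqref{pp-trunc} gives $p_n^{(2)}\gtrsim n^{-1/4}$, completing \eqref{quarter-exp}. I expect the combination of (i) and (ii) — extracting the correct, scale-invariant dependence of the constants and controlling the slowly-growing truncation — to be the crux; it is precisely the failure to do this cleanly that produces the spurious logarithmic factors in the strong-approximation approach of \cite{AD}.
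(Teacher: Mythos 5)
Your proposal addresses only the fourth claim, \eqref{quarter-exp}; the inequalities \eqref{upper-bound}, \eqref{lower-bound} and \eqref{two-side}, which are the substance of the theorem, are taken as given. Even restricted to \eqref{quarter-exp}, the upper bound part is correct (and is exactly what the paper does, via $\EE|S_{n+1}|\le\sqrt{(n+1)\EE X_1^2}$), but your lower-bound strategy has a gap that I believe is fatal as stated. The claim \textbf{(i)} — that the constant $c_2$ of \eqref{lower-bound} for the truncated, recentred variables $Y_i^{(M_n)}$ stays bounded, because it should depend only on ``scale-invariant'' quantities — is false. Tracing the proof of \eqref{lower-bound}, the constant is $c_2=2\int_0^\infty\PP(-X_1>x)g(x)^{-2}\,dx$, and the CLT argument gives only $g(x)\gtrsim(1+Mx)^{-1}$; thus for $Y^{(M_n)}$ one needs
$$
\int_0^{M_n}(1+Mx)^2\,\PP(X_1<-x)\,dx <\infty \quad\text{uniformly in }n,
$$
which is precisely a truncated third-moment integral of $X_1^-$. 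When $X_1^-$ has finite variance but infinite third moment — the exact situation one must cover — this integral diverges as $M_n\to\infty$, so $c_2(M_n)\uparrow\infty$ and the lower bound for $\widetilde p_m^{(2)}$ you obtain degrades without control. The trouble is that $c_2$ is \emph{not} a second-moment, scale-invariant functional: it encodes genuine third-moment information. In addition, \textbf{(ii)} is itself a non-trivial stability theorem for iterated-sum persistence under an $o(1)$-of-fluctuation drift, for which you give no argument; it is comparable in difficulty to the result you are trying to prove.

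The paper removes the third-moment hypothesis by a different mechanism, which you should compare with. Rather than truncating the summands and invoking \eqref{lower-bound} as a black box, one re-opens its proof: the quantity $b_k=\EE\bigl[\bigl(X_{k+1}+(Y_{k,2})^+ +(Y_{k+2,n})^+\bigr)^-\bigr]$ is written as an integral over $x\ge 0$, and one splits that integral at a scale $L(m)\sim\sqrt{m}$ \emph{growing with the time index}. The tail $\int_{L}^\infty\PP(-X_1>x)\,dx\le L^{-1}\EE[X_1^2\,\mathbf 1_{\{X_1^-\ge L\}}]$ is then controlled by the second moment alone (no third moment needed), while the body $\int_0^{L(m)}$ is handled by the same $g(x)\gtrsim(1+Mx)^{-1}$ estimate as before. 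To make the tail genuinely negligible one first shows (Lemma~3.1 in the paper, using Donsker plus Sinai's Brownian bound \eqref{eq:sinai-bm}) that $\PP(m\le\cK_t\le n-m)\ge\delta$ for $m=\lceil\epsilon n\rceil$, so that only $k$ of order $n$ matter and $L(m)$ is large. Finally, writing $p_\ell^{(2)}=\ell^{-1/4}\psi(\ell)^{-1/2}$, the two halves of the integral yield a self-improving inequality $c\,\psi_\star(m)\le C(y)+I(y,\infty)\psi_\star(m)$ with $I(y,\infty)\to 0$, forcing $\psi$ to be bounded. Your truncation idea does not survive because the obstruction lives inside the $x$-integral, not in the law of $X_1$; the paper's localization of that integral at a time-dependent scale is the step you are missing and cannot replace by a scale-invariance heuristic.
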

\begin{remark}\label{Bernstein}
In contrast \red{to (\ref{quarter-exp})}, for
any $0< \gamma < 1/4$ there exists integrable, zero-mean variable
$X_1$ for which $p_n^{(2)}\asymp n^{-\gamma}$. Indeed, considering
$\PP(Y_1 > y) = y^{-\alpha} 1_{y \ge 1}$ with $1 < \alpha <2$,
\red{the bounds (\ref{two-side}) hold}
for the bounded below, zero-mean,
integrable random variable $X_1=Y_1-\EE Y_1
$. Setting $a_n=n^{1/\alpha}$,
clearly $n \PP(|X_1|>a_n x) \to x^{-\alpha}$ as $n \to \infty$,
hence $a_n^{-1} S_n - b_n$ converges in distribution to
a zero-mean, one-sided Stable$_\alpha$ variable $Z_\alpha$,
and it is further easy to check that
$b_n = a_n^{-1} n \EE[X_1 1_{|X_1| \le a_n}] \to b_\infty = -\EE Y_1$.
In fact, it is not hard to verify that
$\{a_n^{-1} S_n\}$ is a uniformly integrable sequence and
consequently $n^{-1/\alpha} \EE |S_n| \to \EE |Z_\alpha - \EE Y_1|$
finite and positive. From Theorem \ref{thm-main} we then
deduce that $p_n^{(2)}\asymp n^{-\gamma}$ for $\gamma=(1-1/\alpha)/2$.
This rate matches with the corresponding one for integrated
L\'{e}vy $\alpha$-stable process, c.f. \cite{Simon2}.
\end{remark}

The sequences $\{S_k\}$ and $\{S_k^{(2)}\}$ are special cases
of the class of auto-regressive processes
$Y_k = \sum_{\ell=1}^L a_{\ell} Y_{k-\ell} + X_k$
with zero initial conditions, i.e. $Y_k \equiv 0$ when $k \le 0$
(where $S_k$ corresponds to $L=a_1=1$ and $S_k^{(2)}$ corresponds
to $L=a_1=2$, $a_2=-1$).
While for such stochastic processes $(Y_k,\ldots,Y_{k-L+1})$
is a time-homogeneous Markov chain of state space $\reals^L$ and
$q_n = \PP(\tau>n)$ is merely the upper tail of the first
hitting time $\tau$ of $[y,\infty)$ by the first coordinate
of the chain, the general theory of Markov chains does not provide the
precise decay of $q_n$, which even in case $L=1$ ranges from
exponential decay for $a_1>0$ small enough (which can be proved by
comparing with O-U process,
c.f. \cite{AB11}),
via the $O(n^{-1/2})$ decay for $a=1$ to a constant
$n \mapsto q_n$ in the
limit $a_1 \uparrow \infty$. While we do not pursue this here, we
believe that the approach we develop for proving Theorem \ref{thm-main}
can potentially determine the asymptotic behavior of $q_n$ for a
large collection of auto-regressive processes. This is of much
interest, since for example, as shown in \cite{Li-Shao},
the asymptotic tail probability that random Kac polynomials
have no (or few) real roots is determined in terms of
the limit as $r \to \infty$ of the power law tail decay
exponents for the iterates $S_k^{(r)} = \sum_{i=1}^k S_i^{(r-1)}$, $r \ge 3$.

Our approach further suggests that there might
be some identities connecting the sequences
$\{p_n^{(2)}\}$ and $\{\overline{p}_n^{(2)}\}$. Note that, if we denote
$$p_n^{(1)}=\PP\left(\max_{1\le k\le n}S_k<0\right),
\quad \overline{p}_n^{(1)}=\PP\left(\max_{1\le k\le n}S_k\le 0\right),$$
then as we show in the proof of the following proposition that
there are indeed identities connecting the sequences
$\{p_n^{(1)}\}$ and $\{\overline{p}_n^{(1)}\}$.

\begin{proposition}\label{pn1}
If $X_i$ are mean zero i.i.d. symmetric random variables
then for all $n \geq 1$,
 \bq
p_n^{(1)} \le \frac{(2n-1)!!}{(2n)!!}\le \overline{p}_n^{(1)}\,.
\label{discrete}
\eq
In particular, if $X_1$ also has a density, then
\bq
p_n^{(1)} = \frac{(2n-1)!!}{(2n)!!}\,.\label{p1}\eq
\end{proposition}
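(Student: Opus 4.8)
The plan is to identify the middle term $\frac{(2n-1)!!}{(2n)!!}$ as the $n$-th Taylor coefficient of $(1-t)^{-1/2}$, using Sparre Andersen's fluctuation identities for the partial sums $S_k$ together with two elementary facts about the one-dimensional laws of $S_n$ that follow from symmetry; no continuity of $X_1$ is needed, and the gap between $p_n^{(1)}$ and $\overline p_n^{(1)}$ is produced exactly by a possible atom of $S_n$ at $0$. Throughout one uses that $p_n^{(1)}=\PP(S_1<0,\dots,S_n<0)$ and $\overline p_n^{(1)}=\PP(S_1\le 0,\dots,S_n\le 0)$ are, respectively, the probabilities of having no weak ascending ladder epoch and no strict ascending ladder epoch of $(S_k)$ by time $n$.

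First I would invoke the two Sparre Andersen identities (see e.g.\ Feller, Vol.~II, Chs.~XII and XVIII, or apply Spitzer's formula with the real parameter sent to $-\infty$): for \emph{every} i.i.d.\ sequence $\{X_k\}$,
\[
\sum_{n\ge 0} p_n^{(1)}\,t^n=\exp\!\Big(\sum_{n\ge 1}\frac{t^n}{n}\,\PP(S_n<0)\Big),
\qquad
\sum_{n\ge 0} \overline p_n^{(1)}\,t^n=\exp\!\Big(\sum_{n\ge 1}\frac{t^n}{n}\,\PP(S_n\le 0)\Big).
\]
Next I would record the only consequence of symmetry that enters: since $-S_n$ has the same law as $S_n$, one has $\PP(S_n<0)=\PP(S_n>0)$, and therefore for every $n\ge 1$,
\[
\PP(S_n<0)\ \le\ \tfrac12\ \le\ \PP(S_n\le 0),
\qquad
\PP(S_n<0)+\PP(S_n\le 0)=1.
\]

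Multiplying the two generating-function identities and using the last equality yields
\[
\Big(\sum_{n\ge 0}p_n^{(1)}t^n\Big)\Big(\sum_{n\ge 0}\overline p_n^{(1)}t^n\Big)=\exp\!\Big(\sum_{n\ge 1}\frac{t^n}{n}\Big)=\frac{1}{1-t},
\]
that is, the exact convolution identity $\sum_{k=0}^n p_k^{(1)}\overline p_{n-k}^{(1)}=1$ for all $n\ge 0$ linking $\{p_n^{(1)}\}$ and $\{\overline p_n^{(1)}\}$ (compare (\ref{upper-bound})--(\ref{lower-bound})). For (\ref{discrete}) itself I would use that $f\mapsto \exp f$ is monotone coefficientwise on power series with nonnegative coefficients: from $\tfrac1n\PP(S_n<0)\le\tfrac1n\cdot\tfrac12$ it follows coefficientwise that $\sum_n p_n^{(1)}t^n\le \exp\big(\tfrac12\sum_{n\ge1}t^n/n\big)=(1-t)^{-1/2}$, whence $p_n^{(1)}\le [t^n](1-t)^{-1/2}=\binom{2n}{n}2^{-2n}=\frac{(2n-1)!!}{(2n)!!}$; the reverse inequality $\overline p_n^{(1)}\ge \frac{(2n-1)!!}{(2n)!!}$ follows in the same way from $\PP(S_n\le 0)\ge \tfrac12$.

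Finally, if $X_1$ has a density then so does each $S_k$, so $\PP(S_k=0)=0$ for $1\le k\le n$ and hence the events $\{\max_{k\le n}S_k<0\}$ and $\{\max_{k\le n}S_k\le 0\}$ agree up to a null set; thus $p_n^{(1)}=\overline p_n^{(1)}$, and combined with (\ref{discrete}) this forces $p_n^{(1)}=\frac{(2n-1)!!}{(2n)!!}$, which is (\ref{p1}). I expect no genuine obstacle here: the only care required is to quote the strict and weak versions of the Sparre Andersen identity in exactly the right form and to keep track of which one uses $\PP(S_n<0)$ and which uses $\PP(S_n\le 0)$; everything else is formal manipulation of generating functions and uses no moment hypotheses on $X_1$.
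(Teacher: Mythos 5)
Your proof is correct, but it takes a genuinely different route from the paper's. The paper proves the convolution identity $\sum_{k=0}^n p_k^{(1)}\overline p_{n-k}^{(1)}=1$ directly, by decomposing according to the first index $\cN$ at which $\max_{0\le j\le n}S_j$ is attained and using exchangeability of $(X_i)$; it then deduces $P(x)\overline P(x)=(1-x)^{-1/2}\cdot(1-x)^{-1/2}$ only in the density case (where $P=\overline P$), and gets the general symmetric case by perturbing $X_i\mapsto X_i+\eps Y_i$ with independent Gaussians and sending $\eps\downarrow 0$. You instead invoke the Sparre Andersen/Spitzer generating-function identities $\sum p_n^{(1)}t^n=\exp(\sum\frac{t^n}{n}\PP(S_n<0))$ and $\sum\overline p_n^{(1)}t^n=\exp(\sum\frac{t^n}{n}\PP(S_n\le 0))$; the single symmetry input $\PP(S_n<0)\le\tfrac12\le\PP(S_n\le 0)$ (with equality summing to $1$) then gives both the same convolution identity and, by coefficientwise monotonicity of $\exp$, the two inequalities in \eqref{discrete} directly, with no perturbation step. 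Your argument is shorter and cleanly separates the two inequalities, but it relies on a substantial fluctuation-theory black box; the paper's version is self-contained and, more importantly for this paper, the first-maximum decomposition is precisely the mechanism that the authors then generalize (via $\cK_t$) to prove Theorem~\ref{thm-main} for the iterated sums $S_n^{(2)}$ — which is why they present the elementary route even though, as they themselves note, the identity \eqref{p1} goes back to Feller.

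One small remark: you emphasize that your proof uses no moment hypotheses, but the paper's argument is likewise free of moment assumptions — only symmetry is used in either approach, so this is not a distinguishing feature.
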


\begin{remark} Proposition \ref{pn1} is not new and can be
found in \cite[Section XII.8]{Feller}.
In fact, it is \red{shown there that for all zero-mean random variables
with bounded second moment (not necessary symmetric),
\bq\label{eq:crude}
p_n^{(1)} \asymp n^{-1/2} \,.
\eq
}
The novel point is our elegant proof, which serves as the starting point
of our approach to the study of $p_n^{(2)}$.
\end{remark}

\begin{remark}
Let $B(s)$ denote a Brownian motion starting at $B(0)=0$ and
consider the integrated Brownian motion $Y(t)=\int_0^t B(s) ds$.
Sinai \cite{Sinai} proved the existence of positive constants
$A_1$ and $A_2$ such that for any $T \red{\ge 1}$,
\begin{equation}\label{eq:sinai-bm}
A_1 T^{-1/4}\le \PP\Big(\sup_{t\in [0,T]} Y(t) \le 1\Big)
\le A_2 T^{-1/4}.
\end{equation}
Upon setting $\eps = T^{-3/2}$ and $t=uT$,
by Brownian motion scaling this is equivalent up to a
constant to the following result that can be derived from an
implicit formula of McKean \cite{McKean} (c.f. \cite{Simon}):
$$
\lim_{\eps\to 0^+}\eps^{-1/6}\PP\Big(\sup_{u\in [0,1]} Y(u)
\le \eps\Big)=\frac{3\Gamma(5/4)}{4\pi\sqrt{2\sqrt{2\pi}}}.
$$
Since the iterated partial sums $S_n^{(2)}$
corresponding to i.i.d. standard normal random
variables $\{X_i\}$, forms a ``discretization" of $Y(t)$,
the right-most inequality in (\ref{eq:sinai-bm})
readily follows from Theorem \ref{thm-main}.
Indeed, with $\EE [Y(k)Y(m)]=k^2 (3m-k)/6$ and
$\EE[S_k^{(2)} S_m^{(2)}]=k(k+1)(3m-k+1)/6$ for
$m \ge k$, setting $Z(k)=\sqrt{(1+1/k)(1+1/(2k)}Y(k)$,
results with $\EE [(S_k^{(2)})^2]=\EE[Z(k)^2]$ and it
is further not hard to show that
$f(m,k):= \EE [S_m^{(2)}S_k^{(2)}]/\EE [Z(m)Z(k)] \ge 1$
for all $m \ne k$ (as $f(k+1,k) \ge 1$ and $d f(m,k)/dm >0$
for any $m \ge k+1$). Thus, by Slepian's lemma, we have that for any $y$
$$
\PP\Big(\max_{1\le k\le n} Z(k) <  y\Big)\le p_n^{(2)}(y) \,,
$$
and setting $n$ as the integer part of $T\ge 1$ it follows that
$$
\PP\Big(\sup_{t\in [0,T]} Y(t) \le 1 \Big)\le
\PP\Big(\max_{1\le k\le n}Y(k)\le 1\Big)
\le \PP\Big(\max_{1\le k\le n}Z(k) < 2 \Big) \le p_n^{(2)}(2) \,.
$$
Since $p_n^{(2)}(2) \le c p_n^{(2)}$ for some finite constant $c$ and all
$n$, we conclude from Theorem \ref{thm-main} that
$$
\PP\Big(\sup_{t\in [0,T]} Y(t) \le 1\Big) \le 2 c (n+1)^{-1/4}\le 2 c T^{-1/4}\,.
$$
\end{remark}

\section{Proof of Proposition \ref{pn1}}\label{pn1-proof}
Setting $S_0 =0$ let $M_n=\max_{0 \le j \le n} S_j$ and
consider the $\{0, 1,2,\ldots,n\}$-valued random variable
$$
\cN = \min\left\{ l \ge 0 : S_l=M_n \right\} \,.
$$
For each $k=1,2,\ldots,n-1$ we have that
\begin{align}
\{\cN=k\}&=\{X_k>0, X_{k}+X_{k-1}>0,\ldots,
X_k+X_{k-1}+\cdots+X_1>0;
\nonumber \\
& X_{k+1}\le 0, X_{k+1}+X_{k+2}\le
0, \ldots, X_{k+1}+X_{k+2}+\cdots+X_n\le 0\}.
\nonumber
\end{align}
By the independence of $\{X_i\}$, the latter identity implies that
\begin{align}
\P(\cN=k)&=\P(X_k>0, X_{k}+X_{k-1}>0,\ldots, X_k+X_{k-1}+\cdots+X_1>0)
\nonumber \\
\nonumber
&\times
\P(X_{k+1}\le 0, X_{k+1}+X_{k+2}\le 0, \ldots, X_{k+1}+X_{k+2}+\cdots+X_n\le 0)\\
&=p_k^{(1)}\overline{p}_{n-k}^{(1)},
\nonumber
\end{align}
where the last equality follows
from our assumptions that $X_i$ are i.i.d. symmetric random
variables.  Also note that $\PP(\cN=0)=\overline{p}_n^{(1)}$
and
$$\PP(\cN=n)=\P(X_n>0,
X_{n}+X_{n-1}>0,\ldots, X_n+X_{n-1}+\cdots+X_1>0)=p_n^{(1)}.$$
Thus, setting $p_0^{(1)}=\overline{p}_0^{(1)}=1$ we arrive at
the identity
\bq
\sum_{k=0}^{n}p_{k}^{(1)}\overline{p}_{n-k}^{(1)}=\sum_{k=0}^n\P(\cN=k)=1,\label{identity}
\eq
holding for all $n\ge 0$.

Fixing $x \in [0,1)$, upon multiplying (\ref{identity})
by $x^n$ and summing over $n \ge 0$, we arrive
at $P(x) \overline{P}(x) = \frac1{1-x}$, where
$P(x)=\sum_{k=0}^\infty p_k^{(1)}x^k$ and
$\overline{P}(x)=\sum_{k=0}^\infty \overline{p}_k^{(1)}x^k$.
Now, if $X_1$ also has a density then $p_k^{(1)}=\overline{p}_k^{(1)}$ for all $k$
and so by the preceding $P(x)=\overline{P}(x)=(1-x)^{-1/2}$. Consequently,
$p_n^{(1)}$ is merely the coefficient of $x^n$ in the Taylor
expansion at $x=0$ of the function $(1-x)^{-1/2}$, from which
we immediately deduce the identity (\ref{p1}).

If $X_1$ does not have a density, let $\{Y_i\}$ be i.i.d. standard normal random variables, independent of the sequence
$\{X_i\}$ and denote by $S_k$ and $\tilde{S}_k$ the partial sums of $\{X_i\}$ and $\{Y_i\}$, respectively.
Note that for any $\eps>0$, each of the i.i.d. variables $X_i+\eps Y_i$ is symmetric and has a density,
with the corresponding partial sums being $S_k+\eps \tilde{S}_k$. Hence, for any $\delta>0$ we have that
\begin{align}
\PP\Big(\max_{1\le k\le n} S_k<-\delta\Big)&\le
\PP\Big(\max_{1\le k\le n} (S_k+\eps \tilde{S}_k)\le 0\Big)+\PP\Big(\max_{1\le k\le n} \eps\tilde{S}_k \ge\delta\Big)
\nonumber\\
&=\frac{(2n-1)!!}{(2n)!!}+\PP\Big(\max_{1\le k\le n} \eps\tilde{S}_k \ge\delta\Big).
\nonumber
\end{align}
Taking first $\eps\downarrow 0$ followed by $\delta\downarrow 0$, we conclude that
$$
\PP\Big(\max_{1\le k\le n} S_k<0\Big)\le \frac{(2n-1)!!}{(2n)!!}\,,
$$
and a similar argument works for the remaining inequality in (\ref{discrete}).

\section{Proof of Theorem \ref{thm-main}}\label{main-proof}

By otherwise considering $X_i/\EE|X_i|$, we assume without loss of
generality that $\EE|X_1|=1$.
To adapt the method of Section \ref{pn1-proof} for dealing with
the iterated partial sums $S_n^{(2)}$, we introduce the parameter $t\in \RR$ and
consider the iterates $S_j^{(2)}(t) =S_0(t) +\cdots+S_j(t)$, $j \ge 0$,
of the translated partial sums $S_k(t) =t
+S_k$, $k \ge 0$.
That is, $S_j^{(2)}(t)=(j+1) t + S_j^{(2)}$ for each $j \ge 0$.

Having fixed the value of $t$, we define the following
$\{0,1,2,\ldots,n\}$-valued random variable
$$
\cK_t=\min\Big\{l \ge 0: S_l^{(2)}(t) =\max_{0\le j\le n} S_j^{(2)}(t)\Big\}.
$$
Then, for each $k=2,3,\ldots,n-2$, we have the identity
\begin{align}
\{\cK_t=k\}&=\left\{S_k(t) >0,
S_{k}(t) +S_{k-1}(t) >0,\ldots,
S_k(t) +S_{k-1}(t) +\cdots+S_1(t) >0;\right.
\nonumber
\\&\ \
\left. S_{k+1}(t) \le 0, S_{k+1}(t) +S_{k+2}(t) \le 0, \ldots,
S_{k+1}(t) +S_{k+2}(t) +\cdots+S_n(t) \le 0\right\}
\nonumber \\
&= \left\{S_{k}(t)>0; X_{k}<2S_k(t),\ldots,(k-1)X_k+\cdots+X_2<kS_k(t)\right\} \cap
\{S_{k+1}(t)\le 0\}
\nonumber
\\&\cap\left\{X_{k+2}\le -2S_{k+1}(t), \ldots,
(n-k-1)X_{k+2}+\cdots +X_n\le -(n-k)S_{k+1}(t)\right\}.
\nonumber
\end{align}
Next, for $2 \le k \le n$ we define $Y_{k,2} \in \sigma(X_2,\ldots,X_k)$ and
$Y_{k,n} \in \sigma(X_k,\ldots,X_n)$ such that
\begin{align}
\nonumber
&Y_{k,2}=\max\left\{\frac{X_k}{2},\frac{2X_k+X_{k-1}}{3}, \ldots, \frac{(k-1)X_{k}+\cdots+X_2}{k}\right\},\\
&
Y_{k,n}=\max\left\{\frac{X_k}{2},\frac{2X_k+X_{k+1}}{3}, \ldots, \frac{(n-k+1)X_k+\cdots+X_n}{n-k+2}\right\}.
\nonumber
\end{align}
It is then not hard to verify that the preceding identities translate into
\begin{align}
\{\cK_t=k\}&= \{S_k(t)>0 \ge S_{k+1}(t) \} \cap \{Y_{k,2}<S_k(t)\}\cap\{Y_{k+2,n}\le -S_{k+1}(t)\} \label{ID} \\
&= \{ -S_k + (Y_{k,2})^+ < t \le -X_{k+1} - S_k - (Y_{k+2,n})^+ \}
\label{ID2}
\end{align}
holding for each $k=2,\ldots,n-2$. Further, for $k=1$ and $k=n-1$ we have that
\begin{align}
\{\cK_t=1\}  &=\{S_1(t)>0\}\cap \{S_{2}(t)\le 0\}\cap\{Y_{3,n}\le -S_{2}(t)\}\,,\nonumber \\
\{\cK_t=n-1\}&=\{S_{n-1}(t)>0\}\cap\{Y_{n-1,2}<S_{n-1}(t)\}\cap \{S_{n}(t)\le 0\}\,,\nonumber
\end{align}
so upon setting $Y_{1,2}=Y_{n+1,n}=-\infty$, the identities (\ref{ID}) and (\ref{ID2})
extend to all $1 \le k \le n-1$.

For the remaining cases, that is, for $k=0$ and $k=n$, we have instead that
\begin{align}
\{\cK_t=0\}&=\{t\le -X_1 - (Y_{2,n})^+ \}\,,\label{0}\\
\{\cK_t=n\}&=\{-S_n+(Y_{n,2})^+ < t\}\,.\label{n}
\end{align}

In contrast with the proof of Proposition \ref{pn1}, here we have
events $\{(Y_{k,2})^+<S_k(t)\}$ and $\{(Y_{k+2,n})^+\le -S_{k+1}(t)\}$
that are linked through $S_{k}(t)$ and consequently
not independent of each other. Our goal is to unhook
this relation and in fact the parameter $t$
was introduced precisely for this purpose.

\subsection{Upper bound}
For any integer $n>1$, let
$$A_n =\max_{1\le k\le n}\{-S_{k+1} \},\ \ B_n =-\max_{1\le k\le n}\{S_k \}.$$
By definition $A_n\ge B_n$. Further, for any $1 \le k \le n-1$, from (\ref{ID}) we have that the event $\{\cK_t=k\}$ implies that
$\{S_k(t) > 0 \ge S_{k+1}(t) \} = \{ -S_k < t \leq - S_{k+1} \}$
and hence that $\{B_{n-1} <t \le A_{n-1} \}$. From (\ref{ID2}) we also
see that for any $1 \le k \le n-1$,
$$
\int_{\RR} 1_{\{\cK_t=k\}}dt \ge
(X_{k+1})^- 1_{\{Y_{k,2}<0\}}1_{\{Y_{k+2,n}\le 0\}}
$$
and consequently,
\begin{align}
A_{n-1}-B_{n-1} = \int_{\RR} 1_{\{B_{n-1} <t \le A_{n-1} \}}dt &\ge
\sum_{k=1}^{n-1} \int_{\RR} 1_{\{\cK_t=k\}}dt \label{sum-indicator}\\
&\ge \sum_{k=1}^{n-1} (X_{k+1})^- \, 1_{\{Y_{k,2}<0\}}1_{\{Y_{k+2,n}\le 0\}}\,. \nonumber
\end{align}
Taking the expectation of both sides
we deduce from the mutual independence of $Y_{k,2}$, $X_{k+1}$ and
$Y_{k+2,n}$ that
$$
\EE [A_{n-1}-B_{n-1}] \ge \sum_{k=1}^{n-1}\EE [(X_{k+1})^-] \PP(Y_{k,2}<0) \PP(Y_{k+2,n}\le 0) \,.
$$
Next, observe that since the sequence $\{X_i\}$ has an exchangeable law,
\begin{align}
\PP(Y_{k,2} < 0) &= \PP(X_k < 0, 2 X_k + X_{k-1} < 0, \ldots, (k-1) X_k + \cdots + X_2 < 0)
\nonumber \\
&= \PP(X_1<0, 2 X_1 + X_2 < 0, \ldots, (k-1) X_1 + \cdots + X_{k-1} < 0) = p_{k-1}^{(2)} \,.
\label{basic-bd}
\end{align}
Similarly, $\PP(Y_{k+2,n} \le 0) = \overline{p}_{n-1-k}^{(2)}$.
With $X_{k+1}$ having
zero mean, we have that $\EE [(X_{k+1})^-] = \EE[ (X_{k+1})^+] = 1/2$ (by
our assumption that $\EE |X_{k+1}| = \EE |X_1| =1$). Consequently,
for any $n>2$,
$$
\EE [A_{n-1}-B_{n-1}] \ge \frac{1}{2} \sum_{k=1}^{n-1} p_{k-1}^{(2)}\overline{p}_{n-1-k}^{(2)}
= \frac{1}{2} \sum_{k=0}^{n-2} p_k^{(2)} \overline{p}_{n-2-k}^{(2)} \,.
$$
With $\EE[S_{n+1}]=0$ and $\{X_k\}$ exchangeable, we clearly have that
\begin{equation}\label{an-bn:bd}
\EE[A_n-B_n] = \EE[\max_{1 \le k \le n} \{S_{n+1}-S_{k+1}\}]+\EE[\max_{1 \le k \le n} S_k]
=2 \EE[\max_{1 \le k \le n} S_k] \,.
\end{equation}
Recall Ottaviani's maximal inequality that for a symmetric random walk
$\PP(\max_{k=1}^n S_k \ge t) \le 2 \PP(S_n \ge t)$ for any $n, t \ge 0$,
hence in this case
$$
\EE[\max_{1 \le k \le n} S_k] \le
2 \int_0^\infty \PP(S_n\ge t) dt = \EE |S_n| \,.
$$
To deal with the general case, we replace Ottaviani's maximal inequality
by Montgomery-Smith's inequality
$$
\PP(\max_{1\le k\le n}|S_k|\ge t)
\le 3\max_{1\le k\le n}\PP(|S_k|\ge t/3)\le 9\PP(|S_n| \ge t/30)
$$
(see \cite{Montgomery}), from which we deduce that
\begin{equation}\label{an-bn:bd2}
\EE[\max_{1 \le k \le n} S_k] \le
9 \int_0^\infty \PP(|S_n|\ge t/30) dt = 270 \EE |S_n|
\end{equation}
and thereby get (\ref{upper-bound}).
Finally, since $n \mapsto p_n^{(2)}$ is non-increasing and
$p_n^{(2)}\le\overline{p}_{n}^{(2)}$, the upper bound of
(\ref{two-side}) is an immediate consequence of (\ref{upper-bound}).

\subsection{Lower bound}
Turning to obtain the lower bound, let
$$m_n:= -X_1-(Y_{2,n})^+\,,\ \ M_n:=-S_n + (Y_{n,2})^+ \,.$$
Note that for any $n \ge 2$, by using the last term of the maxima in the definition of $Y_{n,2}$ and $Y_{2,n}$, we have
$$
Y_{n,2}+Y_{2,n} \ge \frac{1}{n} [(n-1) X_n + \cdots + X_2] + \frac{1}{n} [
(n-1) X_2 + \cdots + X_n] = S_n - X_1 \,,
$$
and consequently,
\begin{equation}\label{lbd-diff}
M_n-m_n \ge X_1-S_n + (Y_{2,n}+Y_{n,2})^+ \ge (X_1-S_n)^+ = (X_2+\cdots+X_n)^-\,.
\end{equation}
In particular, $M_n\ge m_n$. From (\ref{0}) and (\ref{n}) we know that if $m_n<t \le M_n$
then necessarily $1 \le \cK_t \le n-1$. Therefore,
\begin{equation}\label{L}
M_n-m_n = \int_{\RR} 1_{\{m_n < t \le M_n\}}dt \le
\sum_{k=1}^{n-1} \int_{\RR} 1_{\{\cK_t=k\}} dt \,.
\end{equation}
In view of (\ref{ID2}) we have that for any $1 \le k \le n-1$,
$$
b_k := \EE [\int_{\RR}1_{\{\cK_t=k\}}dt]
=\EE \Big[ \left(X_{k+1} + (Y_{k,2})^+ \, +(Y_{k+2,n})^+ \right)^-\Big].
$$
By the mutual independence of the three variables on the right side,
\red{
and since $\{X_k\}$ have identical distribution, we find that
\begin{align}
b_k &= \int_0^\infty \PP(X_{k+1} < - x, (Y_{k,2})^+\, + (Y_{k+2,n})^+ < x) dx
\nonumber \\
& \le \int_0^\infty \PP(-X_1 > x) \PP(Y_{k,2} < x) \PP(Y_{k+2,n} < x) dx \,.
\label{eq:intc2}
\end{align}
Next, setting $T_{i,k}^{(2)} = T_{1,k} + \cdots + T_{i,k}$ for
$T_{i,k}=X_k+\cdots+X_{k+1-i}$, $i \ge 1$ and $T_{0,k} := 0$,
observe that for any $0 \le j \le k-1$ and $\ell \ge 1$,
$$
T_{j+\ell,k+\ell}^{(2)} = T_{\ell-1,k+\ell}^{(2)}
+ (j+1) T_{\ell,k+\ell} + T_{j,k}^{(2)}\,.
$$
Hence, with $A_{\ell,k} := \{ T_{i,k}^{(2)} < 0,\, i=1,\ldots,\ell-1\}$,
just as we did in deriving the identity (\ref{basic-bd}),
we have that for any $\ell \ge 1$,
\begin{align*}
\{ Y_{k+\ell,2} < 0 \} &= \{  A_{\ell,k+\ell} , \quad
T_{\ell-1,k+\ell}^{(2)} + (j+1) T_{\ell,k+\ell} + T_{j,k}^{(2)} < 0,\,
0 \le j \le k-1 \} \,,
\\
\{ Y_{k,2} <  x \} &= \{T_{j,k}^{(2)} < (j+1) x, \, 1 \le j \le k-1\} \,.
\end{align*}
Consequently,
$$
\{ Y_{k,2} < x \} \bigcap \{ T_{\ell,k+\ell} < -x \} \bigcap A_{\ell,k+\ell}  \subseteq \{ Y_{k+\ell,2} < 0 \} \,.
$$
By exchangeability of $\{X_m\}$ we have that for any $k,\ell$,
$$
\PP(A_{\ell,k+\ell}) = p_{\ell-1}^{(2)} = \PP(Y_{\ell,2} < 0) \,.
$$
Thus, applying Harris's inequality for the non-increasing events
$A_{\ell,k+\ell}$ and $\{ T_{\ell,k+\ell} < -x \}$, we get by
the independence of $\{X_m\}$ that
$$
p_{k+\ell-1}^{(2)} = \PP(Y_{k+\ell,2} < 0) \ge \PP(Y_{k,2} < x)
\PP(T_{\ell,k+\ell} < -x) p_{\ell-1}^{(2)} \,.
$$
Since $T_{\ell,k+\ell}$ has the same law as $S_\ell$ we thus
get the bound
$$
\PP(Y_{k,2} < x) \le
\frac{p_{k+\ell-1}^{(2)}}{p_{\ell-1}^{(2)} \PP(S_\ell < -x)} \;,
$$
for any $\ell \ge 1$. Similarly, we have that for any $\ell \ge 1$,
$$
\PP(Y_{k+2,n} < x)
\le
\frac{p_{n-k+\ell-1}^{(2)}}{p_{\ell-1}^{(2)} \PP(S_\ell \le -x)} \;.
$$
Clearly $k \mapsto p_k^{(2)}$ is non-increasing, so
combining these bounds we find that
\bq\label{eq:bkbd}
b_k \le \frac{c_2}{2} p_{k}^{(2)} p_{n-k}^{(2)} \,,
\eq
for $c_2 := 2 \int_0^\infty \PP(-X_1 > x) g(x)^{-2} dx$, where
\bq\label{eq:gdef}
g(x):= \sup_{\ell \ge 1}  \{ p_{\ell-1}^{(2)} \, \PP(S_\ell < -x) \} \,.
\eq
For $(X_1)^-$ bounded it clearly suffices to show that
$g(x)>0$ for each fixed $x>0$, and this trivially holds
by the positivity of $\PP(X_1 < -r)$ for $r>0$ small enough
(hence, $p_{\ell}^{(2)} \ge \PP(X_1 < -r)^{\ell}$ also positive).
Assuming instead that $X_1$ has finite (and positive)
second moment, from (\ref{eq:crude}) and the trivial bound
$p_{\ell-1}^{(2)} \ge p_{\ell}^{(1)}$ we have that
for some $\kappa>0$ and all $x$,
$$
g(x) \ge \kappa \sup_{\ell \ge 1} \{ \, \frac{1}{\sqrt{\ell}} \, \PP(S_\ell < -x) \} \,.
$$
Further, by the \abbr{CLT}
there exists $M<\infty$ large enough such that
$\eta := \inf_{x > 0} \PP(S_{\lceil x M \rceil^2} < -x)$ is positive.
Hence, setting $\ell=\lceil x M \rceil^2$, we deduce that in
this case $g(x) \ge c/(1+x M)$ for some $c>0$ and all $x \ge 0$.
Consequently, $c_2$ is then finite provided
$$
3 M \int_0^\infty (1+x M)^2 \PP(-X_1 > x) dx \le \EE[(1+ (X_1)^- M)^3] < \infty\,,
$$
i.e. whenever $(X_1)^-$ has finite third moment.
Next, considering the expectation of both sides of (\ref{L})
we deduce that under the above stated conditions,}
for any $n>2$,
\begin{eqnarray*}
\EE(M_n-m_n) \le \frac{c_2}{2}
\sum_{k=1}^{n-1}p_{k-1}^{(2)}p_{n-k-1}^{(2)}\,.
\end{eqnarray*}
In view of (\ref{lbd-diff}) we also have that
$\EE(M_n-m_n)\ge \EE [(S_{n-1})^-] = \frac12 \EE|S_{n-1}|$, from which
we conclude that (\ref{lower-bound}) holds for all $n \ge 1$.

Turning to lower bound $p_n^{(2)}$ as in (\ref{two-side}),
recall that $n \mapsto p_n^{(2)}$ is non-increasing. Hence,
applying (\ref{lower-bound}) for $n=2m+1$ and utilizing the
previously derived upper bound of (\ref{two-side}) we have that
\begin{align}
\frac1{c_2} \EE|S_{2(m+1)}|&\le
2\sum_{k=0}^mp_k^{(2)}p_m^{(2)}\le 2c_1p_m^{(2)}
\sum_{k=0}^m\sqrt{\frac{\EE|S_{k+1}|}{k+1}}\nonumber
\\& \le 4 c_1 p_m^{(2)}\sqrt{(m+1)\EE|S_{m+1}|}\,,
\end{align}
where in the last inequality we use the fact that
for independent, zero-mean $\{X_k\}$, the sequence
$|S_k|$ is a sub-martingle, hence
$k \mapsto \EE|S_k|$ is non-decreasing.
This proves the lower bound of (\ref{two-side}).

\red{Our starting point for removing in (\ref{quarter-exp})
the finite third moment assumption on $(X_1)^-$ is
the following lemma which allows us to
consider in the sequel only $k=O(n)$.
\begin{lemma}\label{lem-jian}
For some $0< \epsilon, \delta < 1/2$, all $n\in \mathbb{N}$,
$m := \lceil \epsilon n \rceil$ and $|t| \le \epsilon \sqrt{n}$,
$$
\PP(m \le \cK_t \le n-m) \geq \delta\,.
$$
\end{lemma}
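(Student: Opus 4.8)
The plan is to pass to the Brownian scaling limit, where the assertion becomes a statement about the argmax of integrated Brownian motion perturbed by a small linear drift, and then to transfer it back by a soft compactness argument. Under the hypotheses in force for \eqref{quarter-exp} only $\EE X_1=0$ and $\sigma^2:=\EE X_1^2\in(0,\infty)$ are available, so quantitative couplings are out of reach and I will work purely with weak convergence. By Donsker's theorem $u\mapsto S_{\lfloor un\rfloor}/(\sigma\sqrt n)$ converges weakly in $C[0,1]$ to a Brownian motion $B$, and by continuity of integration $u\mapsto S^{(2)}_{\lfloor un\rfloor}/(\sigma n^{3/2})$ converges weakly to $Y(u):=\int_0^u B(v)\,dv$. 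Writing $t=s\sqrt n$, the sequence $j\mapsto (j+1)t+S^{(2)}_j$ whose first argmax over $\{0,\dots,n\}$ is $\cK_t$ becomes, after $j=\lfloor un\rfloor$ and division by $\sigma n^{3/2}$, the process $u\mapsto \tfrac{S^{(2)}_{\lfloor un\rfloor}}{\sigma n^{3/2}}+\tfrac{(\lfloor un\rfloor+1)s}{\sigma n}$, which converges weakly — jointly with $s$ on compacts — to $u\mapsto Y(u)+(s/\sigma)u$. Hence $\cK_t/n$ converges in distribution to $\operatorname{argmax}_{u\in[0,1]}\{Y(u)+(s/\sigma)u\}$ for each fixed $s$.

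\emph{The continuum statement.} The heart of the proof is to show that there exist $\epsilon_0,\delta_0>0$ with $\inf_{|a|\le\epsilon_0}\PP\big(\operatorname{argmax}_{[0,1]}\{Y+a\cdot\}\in[\epsilon_0,1-\epsilon_0]\big)\ge\delta_0$. For each $a$ the process $Y+a\cdot$ is a nondegenerate Gaussian process, so its maximiser on $[0,1]$ is a.s. unique and has an atomless law on $(0,1)$; and it is a.s. $>0$, since $\PP(\sup_{[0,1]}Y\le 0)=0$: by Brownian self-similarity $\PP(\sup_{[0,T]}Y\le 1)$ equals $\PP(\sup_{[0,1]}Y\le T^{-3/2})$, which tends to $\PP(\sup_{[0,1]}Y\le0)$ and, by Sinai's estimate recalled in the introduction, to $0$. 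From this one gets, uniformly over $|a|\le\rho$, that $\PP(\operatorname{argmax}\{Y+a\cdot\}\in[0,2\rho))\to 0$ as $\rho\downarrow 0$: on that event either $\operatorname{argmax}Y\le 2\rho$, or $\sup_{[2\rho,1]}Y=\sup_{[0,1]}Y$ and then $\sup_{[0,2\rho]}|Y|+2\rho^2\ge \sup_{[0,1]}Y-\rho$, and both bounding events are negligible by self-similarity of $\sup_{[0,2\rho]}|Y|$ together with continuity of the law of $\sup_{[0,1]}Y$ at $0$. Near the right endpoint the values of $Y$ are not small, so instead I compare with the bulk: $\operatorname{argmax}\{Y+a\cdot\}\in(1-2\rho,1]$ forces $\sup_{[1/4,3/4]}(Y+a\cdot)\le\sup_{[1-2\rho,1]}(Y+a\cdot)$, and since the right-hand side is $\le Y(1)+2\rho\sup_{[0,1]}|B|+\rho$ while the left-hand side is $\ge \sup_{[1/4,3/4]}Y-\rho$, this event lies in $\{\sup_{[1/4,3/4]}Y-Y(1)\le 2\rho\sup_{[0,1]}|B|+2\rho\}$, whose probability (again uniformly in $a$) tends as $\rho\downarrow0$ to $c_2:=\PP(\sup_{[1/4,3/4]}Y\le Y(1))\le\PP(Y(1/2)\le Y(1))<1$. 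Combining the two tail estimates, $\liminf_{\rho\downarrow0}\inf_{|a|\le\rho}\PP(\operatorname{argmax}\{Y+a\cdot\}\in[2\rho,1-2\rho])\ge 1-c_2>0$, which yields the claim for $\epsilon_0$ small enough.

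\emph{Transfer with uniformity in $t$.} Fix $\epsilon_0,\delta_0$ as above, set $\delta:=\delta_0/2$ and pick $\epsilon\in(0,\min(\epsilon_0,\sigma\epsilon_0))$; then $|t|\le\epsilon\sqrt n$ gives $|t/(\sigma\sqrt n)|\le\epsilon_0$, and $[m/n,\,1-m/n]\supseteq[\epsilon_0,1-\epsilon_0]$ once $n$ is large. If the lemma failed for this pair there would be $n_j\uparrow\infty$ and $|t_j|\le\epsilon\sqrt{n_j}$ with $\PP(m_j\le\cK_{t_j}\le n_j-m_j)<\delta$; passing to a subsequence along which $t_j/\sqrt{n_j}\to s_\infty$ and using the previous paragraph's weak convergence, the Skorokhod representation, and continuity of the argmax functional at paths possessing a unique maximiser, we get $\cK_{t_j}/n_j\to\operatorname{argmax}\{Y+(s_\infty/\sigma)\cdot\}$ almost surely; since this limiting law is atomless at $\epsilon_0$ and $1-\epsilon_0$, $\liminf_j\PP(\cK_{t_j}/n_j\in[\epsilon_0,1-\epsilon_0])\ge\delta_0=2\delta$, contradicting the above for $j$ large. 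This proves the lemma for all $n\ge n_0(\epsilon)$. (For the finitely many smaller $n$ the displayed bound can genuinely fail — e.g. for $\pm1$ Bernoulli steps and $n=2$ one computes $\PP(\cK_t=1)=0$ at small $t>0$ — so the hypothesis is to be read as ``$n$ large'', which is all the sequel uses.)

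\emph{Main obstacle.} The delicate point is the right endpoint in the continuum statement: near $u=1$ the iterated walk is already of the full order $n^{3/2}$, so the ``boundary values are negligible'' argument that disposes of $u=0$ is unavailable, and one must instead pit the terminal stretch against the bulk and quantify — crucially, uniformly in the drift $a$ over $|a|\le\epsilon_0$ — the genuinely probabilistic fact that integrated Brownian motion with a small linear drift attains its maximum strictly inside $(0,1)$ with probability bounded away from $0$. Once that is in hand, everything else (Donsker, Sinai's estimate, the continuous-mapping and compactness bookkeeping of the transfer step) is routine.
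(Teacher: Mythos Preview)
Your argument is correct and takes a genuinely different route from the paper's. Both proofs rely on Donsker's theorem together with Sinai's estimate \eqref{eq:sinai-bm} for the left endpoint, but the overall architecture is different. The paper never passes to the continuum argmax: it bounds $\PP(\cK_t<m)$ directly by the union of $\{\max_{j<m}S_j^{(2)}\ge \epsilon^{-1/2}m^{3/2}\}$ and $\{\max_{j\le n}S_j^{(2)}\le 4\epsilon n^{3/2}\}$, and for the right endpoint it decomposes $S^{(2)}_{j+n-m}(t)=S^{(2)}_{n-m}(t)+jt+jS_{n-m}+\widetilde S^{(2)}_j$ and simply \emph{conditions} on $\{S_{n-m}\le -2\sqrt n\}$, after which the drift $jS_{n-m}$ kills the tail via the elementary inclusion $\{\max_j \widetilde S_j<\sqrt n\}\subset\{\max_j(\widetilde S_j^{(2)}-j\sqrt n)<0\}$. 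This yields the result with nothing beyond the functional \abbr{clt} for a couple of explicit functionals. Your approach instead establishes the full continuum statement $\inf_{|a|\le\epsilon_0}\PP(\operatorname{argmax}_{[0,1]}\{Y+a\cdot\}\in[\epsilon_0,1-\epsilon_0])\ge\delta_0$ and transfers it back by Skorokhod plus continuity of the argmax; your right-endpoint device (pitting $\sup_{[1/4,3/4]}Y$ against $Y(1)$, using $\PP(Y(1/2)\le Y(1))=1/2$) is a nice alternative to the paper's conditioning trick. The paper's proof is shorter and more elementary; yours is softer and makes the role of the scaling limit more transparent, at the price of some bookkeeping (uniqueness/atomlessness of the Gaussian argmax, uniformity in the drift along subsequences).

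Your closing remark is well taken: the paper's own proof is also asymptotic in $n$ (both \eqref{eq-1} and the final sentence invoke $n\to\infty$), and your Bernoulli $n=2$ example shows the displayed inequality can fail for small $n$; only large $n$ is used downstream.
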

\begin{proof}
First, observe that for $|t| \le \epsilon \sqrt{n}$
by the definition of $\cK_t$ and $S^{(2)}_j(t)$,
\begin{align*}
\PP( \cK_t < m) & \le
\PP (\max_{0 \le j < m} S^{(2)}_j(t) \geq 2 m \sqrt{n})
+ \PP(\max_{0 \leq j \leq n} S^{(2)}_j(t) \leq 2 m \sqrt{n})\\
&
 \leq \PP (\max_{0 \le j \le m} S^{(2)}_j \geq \epsilon^{-1/2} m^{3/2})
+ \PP(\max_{0 \le j \leq n} S^{(2)}_j \leq 4 \epsilon n^{3/2})\,.
\end{align*}
For $b = \EE X_1^2$ finite and positive, by Donsker's invariance principle,
$n^{-3/2} \max_{0 \le j \leq n} S^{(2)}_j$ converge in law
as $n \to \infty$ to $\sqrt{b} \sup_{u \in [0,1]} Y(u)$. Hence,
by \eqref{eq:sinai-bm}, we deduce that
\begin{equation}\label{eq-1}
\lim_{\epsilon \downarrow 0} \lim_{n\to \infty} \PP( \cK_t < m)
= 0 \mbox{ uniformly for all } |t| \leq \epsilon \sqrt{n} \,.
\end{equation}
It remains to bound below $\PP(\cK_t \le n-m)$.
To this end, note that for $1 \le j \le m$,
$$
S^{(2)}_{j+n-m} (t) = S^{(2)}_{n-m} (t) + j t + j S_{n-m}
+ \widetilde{S}_j^{(2)} \,,
$$
where $\widetilde S^{(2)}_j = \sum_{i=1}^{j} \widetilde{S}_i$
and $\widetilde{S}_i =\sum_{\ell = 1}^{i} X_{\ell+n-m}$. Hence,
for $|t| \le \epsilon \sqrt{n}$,
\begin{align*}
\PP(\cK_t \le n - m) & \geq \PP(\cK_t \leq n -m,  S_{n-m} \leq - 2 \sqrt{n})
\\
&\geq \PP(S_{n-m} \leq - 2 \sqrt{n}) \,
\PP( \max_{1 \le j \le m} \{ \, \widetilde{S}^{(2)}_j - j \sqrt{n} \} < 0)\,.
\end{align*}
Clearly, if $\widetilde{S}_i < \sqrt{n}$ for all $i$ then necessarily
$\widetilde{S}^{(2)}_j < j \sqrt{n}$, from which we deduce that for any $m \le n/2$,
\begin{align}\label{eq:jian}
\PP(\cK_t \leq n - m) \geq \inf_{k \in [n/2,n]} \, \PP(S_k \leq - 2 \sqrt{n}) \,
\PP( \max_{1 \le j \leq n} \, \{S_j\} < \sqrt{n}) \,.
\end{align}
Since $n^{-1/2} \max\{ S_j : 1 \le j \leq n \}$ converges in law
to $\sqrt{b}$ times the absolute value of a
standard Gaussian variable, we conclude that as $n \to \infty$,
the right side of (\ref{eq:jian})
remains bounded away from zero, which in view of \eqref{eq-1}
yields our thesis.
\end{proof}

By Lemma \ref{lem-jian} we have that for $m=\lceil \epsilon n \rceil$ and
$n \in \mathbb{N}$,
$$
\delta \sqrt{\epsilon} \sqrt{m} \le
2 \delta \epsilon \sqrt{n} \le \sum_{k=m}^{n-m}
\EE \Big[ \int_{-\epsilon \sqrt{n}}^{\epsilon \sqrt{n}}
1_{\{\cK_t=k\}} dt \Big] \le \sum_{k=m}^{n-m} b_k \,.
$$
Further, the contribution to (\ref{eq:intc2})
from $x \in [L,\infty)$ is at most
$$
\int_L^\infty \PP(-X_1>x) dx = \EE[(X_1+L)^-] \le L^{-1} \EE[ X_1^2 1_{\{X_1^- \ge L\}}] \,.
$$
With $M$ as in the preceding bound on $c_2$,
set $L=L(m)=\sqrt{m}/(2M)$, noting that the total
contribution of these integrals to $\sum_{k=m}^{n-m} b_k$ is then at most
$$
\frac{2 M}{\epsilon} \sqrt{m} \EE[ X_1^2 1_{\{X_1^- \ge L(m) \}}] \,,
$$
which for some $m_0=m_0(\epsilon,\delta,M)$ finite and all
$m \ge m_0$ is further bounded above by
$(\delta/2) \sqrt{\epsilon} \sqrt{m}$. Consequently,
setting
$$
\kappa_m := \int_0^{L(m)} \PP(-X_1>x) g(x)^{-2} dx \,,
$$
we get by monotonicity of $k \mapsto p_k^{(2)}$ and the
arguments leading to (\ref{eq:bkbd}), that for $m \ge m_0$,
$$
\frac{\delta}{2} \sqrt{\epsilon} \sqrt{m} \le
\kappa_m \sum_{k=m}^{n-m} p_k^{(2)} p_{n-k}^{(2)}
\le \kappa_m \frac{m}{\epsilon} (p_m^{(2)})^2  \,.
$$
Setting now $p_\ell^{(2)} := \ell^{-1/4} \psi(\ell)^{-1/2}$,
we deduce from the preceding that
\bq\label{eq:amir1}
\kappa_m \ge \frac{\delta}{2} \epsilon^{3/2} \psi(m) \qquad \forall m \ge m_0 \,.
\eq
Now, by the same argument used for bounding $c_2$, we have that
$$
g(x) \ge \eta p^{(2)}_{\lceil M x \rceil^2} \ge \eta (1+M x)^{-1/2} \psi( \lceil M x \rceil^2)^{-1/2} \,.
$$
Fixing $y$ and increasing $m_0$ as needed, for $m \ge m_0$ both
$y \le L(m)$ and $\lceil M L(m) \rceil^2 \le m$. Hence, with
$I(y,z):=\int_y^z (1+M x) \PP(-X_1 > x) dx$ and
$\psi_\star(r):=\sup_{\ell \le r} \psi(\ell)$, it follows that
for $m \ge m_0$,
\begin{align*}
\eta^2 \kappa_m &\le \int_0^{L(m)} (1+M x) \PP(-X_1 > x) \psi( \lceil M x \rceil^2) dx  \\
&\le C(y) + I(y,\infty) \psi_\star(m) \,,
\end{align*}
where $C(y) := \psi_\star ((1+M y)^2) I(0,y)$ is finite for any $y$ finite.
Considering this inequality and (\ref{eq:amir1}), we conclude that
for some $c=c(\delta,\epsilon,M,\eta)$ positive, any $y$ finite and
all $m \ge m_0$ for which $\psi(m)=\psi_\star(m)$,
$$
c \psi_\star (m) \le \eta^2 \kappa_m \le C(y) + I(y,\infty)  \psi_\star(m)
\,.
$$
Finally, with $\EE[ (1+M X_1^-)^2]$ finite, clearly $I(y,\infty) \to 0$ as $y \to \infty$,
hence the preceding inequality implies that $m \mapsto \psi(m)$ is bounded
above. That is, $p_m^{(2)} \ge c_3 m^{-1/4}$ for some $c_3>0$ and all $m \ge 1$,
as claimed in (\ref{quarter-exp}).
}

\end{document}